\begin{document}

\newcommand{\HALversion}[1]{#1}
\newcommand{\ALTversion}[1]{}

\mainmatter

\title{Lipschitz Bandits without the Lipschitz Constant}
\author{S{\'e}bastien Bubeck\inst{1} \and Gilles Stoltz\inst{2,3}
\and Jia Yuan Yu\inst{2,3}}
\institute{Centre de Recerca Matem{\`a}tica, Barcelona, Spain
\and
Ecole normale sup{\'e}rieure, CNRS, Paris, France
\and
HEC Paris, CNRS, Jouy-en-Josas, France}
\maketitle

\newcommand{\R}{\mathbb{R}}
\newcommand{\cX}{\mathcal{X}}
\newcommand{\cF}{\mathcal{F}}
\newcommand{\cB}{\mathcal{B}}
\newcommand{\of}{\overline{f}}
\newcommand{\oL}{\overline{L}}
\newcommand{\oR}{\overline{R}}
\newcommand{\E}{\mathbb{E}}
\renewcommand{\d}{\mbox{d}}
\renewcommand{\geq}{\geqslant}
\renewcommand{\leq}{\leqslant}
\newcommand{\wh}{\widehat}
\newcommand{\wt}{\widetilde}
\newcommand{\ux}{\underline{x}}
\newcommand{\uy}{\underline{y}}
\newcommand{\uk}{\underline{k}}
\newcommand{\uK}{\underline{K}}
\newcommand{\uI}{\underline{I}}
\newcommand{\uu}{\underline{u}}
\newcommand{\us}{\underline{s}}
\newcommand{\uone}{\underline{1}}
\newcommand{\transpose}{\mbox{\scriptsize{T}}}
\newcommand{\norm}[1][\,\cdot\,]{\ensuremath{\Arrowvert #1 \Arrowvert}}
\newcommand{\eps}{\varepsilon}

\newtheorem{assumption}{Assumption}

\begin{abstract}
We consider the setting of stochastic bandit problems with a continuum of arms
indexed by $[0,1]^d$.
We first point out that the strategies considered so far in the literature
only provided theoretical guarantees of the form: given some tuning parameters,
the regret is small with respect to a
class of environments that depends on these
parameters. This is however not the right perspective, as it is the strategy
that should adapt to the specific bandit environment at hand, and not the other way
round. Put differently, an adaptation issue is raised. We solve it for the special case
of environments whose mean-payoff functions are globally Lipschitz. More
precisely, we show that the minimax optimal orders of magnitude
$L^{d/(d+2)} \, T^{(d+1)/(d+2)}$ of the regret bound over $T$ time instances against
an environment whose mean-payoff function $f$ is Lipschitz with constant $L$
can be achieved without knowing $L$ or $T$ in advance. This is in contrast
to all previously known strategies, which require to some extent the knowledge
of~$L$ to achieve this performance guarantee.
\end{abstract}

\section{Introduction}

In the (stochastic) bandit problem, a gambler tries to maximize the revenue
gained by sequentially playing one of a finite number of arms
that are each associated with initially unknown
(and potentially different) payoff distributions~\cite{Rob52}.
The gambler selects and pulls arms one by one in a sequential manner,
simultaneously learning about the machines' payoff-distributions and accumulating rewards (or losses).
Thus, in order to maximize his gain, the gambler must choose the next arm by
taking into consideration both the urgency of gaining reward (``exploitation'')
and acquiring new information (``exploration'').
Maximizing the total cumulative payoff is equivalent to minimizing
the (total) {\em regret}, that is, minimizing the difference between the total cumulative payoff of the
gambler and that of another clairvoyant gambler who chooses the arm with the best mean-payoff in every
round. The quality of the gambler's strategy can be characterized by the rate of growth of his expected
regret with time. In particular, if this rate of growth is sublinear, the gambler in the long run plays
as well as his clairvoyant counterpart. \smallskip

\noindent
\textbf{Continuum-armed bandit problems.}
Although the early papers studied bandits with a finite number of arms, researchers soon realized
that bandits with infinitely many arms are also interesting,
as well as practically significant. One particularly important case is when the arms are identified
by a finite number of continuous-valued parameters,
resulting in online optimization problems over continuous finite-dimensional spaces.
During the last decades numerous contributions have investigated  such continuum-armed bandit
problems, starting from the early formulations of \cite{Agr95b,Cop04,Kle04}
to the more recent approaches of \cite{AOS07,KSU08,HOO}.
A special case of interest, which forms a bridge between
the case of a finite number of arms and the continuum-armed setting, is
the problem of bandit linear optimization, see~\cite{DHK08b} and the references therein.
\smallskip

\noindent
\textbf{Not the right perspective!}
We call an environment $f$ the mapping that associates with each arm $x \in \cX$
the expectation $f(x)$ of its associated probability distribution.
The theoretical guarantees given in the literature mentioned above
are of the form: given some tuning parameters, the strategy is competitive, and sometimes
even minimax optimal, with respect to a large class of environments
that unfortunately depends on these parameters. But of course, this is not the
right perspective: it is the strategy that should adapt to the environment, not the other
way round!

More precisely, these parameters describe the smoothness
of the environments $f$ in the class at hand
in terms of a global regularity and/or local regularities around the global maxima of $f$.
The issues raised by some of the works mentioned above can be roughly described as follows:
\begin{itemize}
\item The class of environments for the CAB1 algorithm of~\cite{Kle04} is formed by environments that are
$(\alpha,L,\delta)$--uniformly locally Lipschitz and the strategy CAB1 needs to know $\alpha$
to get the optimal dependency in the number $T$ of arms pulled;
\item For the Zooming algorithm of~\cite{KSU08}, it is formed by environments that are 1--Lipschitz with respect to a fixed
and known metric $L$;
\item The HOO algorithm of~\cite{HOO} basically needs to know the pseudo-metric $\ell$
with respect to which $f$ is weakly Lipschitz continuous, with Lipschitz constant
equal to 1;
\item Other examples include the UCB-air algorithm (which relies on a smoothness parameter $\beta$, see~\cite{WAM09}),
the OLOP algorithm (smoothness parameter $\gamma$, see~\cite{BM10}), the LSE algorithm (smoothness parameter $C_L$, see~\cite{YM11}),
the algorithm presented in~\cite{AOS07} and so on.
\end{itemize}
\smallskip

\noindent
\textbf{Adaptation to the unknown smoothness is needed.}
In a nutshell, adaptive methods are required. By adaptive methods, we mean---as is done in the statistical literature---agnostic methods, i.e., with minimal prior knowledge about $f$,
that nonetheless obtain almost the same
performance against a given environment $f$ as if its smoothness were known beforehand.

More precisely,
given a fixed (possibly vector-valued) parameter $L$ lying in a set $\mathcal{L}$
and a class of allowed environments $\cF_L$, where $L \in \mathcal{L}$,
existing works present algorithms that are such that their worst-case regret bound
over $T$ time steps against environments in $\cF_L$,
\[
\sup_{f \in \cF_L} \, R_T(f) \leq \varphi(T,L) \,,
\]
is small and even minimax optimal, i.e., such that it has the optimal dependencies
on $T$ and $L$. However, to do so, the knowledge of $L$ is required.
In this work, we are given a much larger class of environments
$\cF = \cup_{L \in \mathcal{L}} \,\, \cF_L$, and our goal is an algorithm that adapts in
finite time to every instance $f$ of $\cF$, in the sense that for all $T$ and $f \in \cF$,
the regret $R_T(f)$ is at most of the order of $\min \varphi(T,L)$, where the minimum is
over the parameters $L$ such that $f \in \cF_L$.

Since we are interested in worst-case bounds, we will have to consider {dis\-tri\-bu\-tion}-free bounds
(i.e., bounds that only depend on a given class $\cF_L$);
of course, the orders of magnitude of the latter, even when they are minimax optimal,
are often far away --as far as the dependencies in $T$ are concerned-- with respect to
distribution-dependent bounds (i.e., bounds that may depend on a specific instance $f \in \cF_L$).
\smallskip

\noindent
\textbf{Links with optimization algorithms.}
Our problem shares some common points with the maximization of a deterministic function $f$
(but note that in our case, we only get to see noisy observations of the values of $f$).
When the Lipschitz constant $L$ of $f$ is known, an approximate maximizer can be found
with well-known Lipschitz optimization algorithms (e.g., Shubert's algorithm).
The case of unknown $L$ has been studied in \cite{Jones93,Horn05}.
The DIRECT algorithm of \cite{Jones93}
carries out Lipschitz optimization by using the
smallest Lipschitz constant that is consistent with the observed data;
although it works well in practice, only asymptotic convergence can be guaranteed. The algorithm of \cite{Horn05} iterates over an increasing sequence of possible values of $L$; under an additional assumption on the minimum increase in the neighborhood of the maximizers, it guarantees a worst-case error of the order of $L^2T^{-2/d}$ after taking $T$ samples of the deterministic function~$f$.
\medskip

\noindent
\textbf{Adaptation to a global Lipschitz smoothness in bandit problems.}
We provide in this paper a first step toward a general theory of adaptation.
To do so, we focus on the special case of classes $\cF_L$ formed by
all environments $f$ that are $L$--Lipschitz with respect to the supremum norm
over a subset of $\R^d$: the hypercube $[0,1]^d$ for simplicity.
This case covers partially the settings of~\cite{HOO} and~\cite{KSU08},
in which the Lipschitz constant was equal to 1, a fact known by the algorithms.
(Extensions to H{\"o}lderian-type assumptions as in~\cite{Kle04,AOS07} will be considered in
future work.)

As it is known, getting the minimax-optimal dependency on $T$ is easy, the difficult part
is getting that on $L$ without knowing the latter beforehand.
\smallskip

\noindent
\textbf{Our contributions.}
Our algorithm proceeds by discretization as in~\cite{Kle04}. To determine the correct
discretization step, it first resorts to an uniform exploration yielding
a rather crude estimate of the Lipschitz constant (that is however sufficient for our needs);
in a second phase, it finds the optimal interval
using a standard exploration-exploitation strategy.
Our main assumptions are (essentially) that $f$ and its derivative are Lipschitz continuous in the hypercube.

We feel that this two-step approach can potentially be employed in more general settings well beyond ours:
with the notation above,
the uniform-exploration phase performs a model-selection step and recommends a class $\cF_{\wt{L}}$,
which is used in the second phase to run a continuum-armed bandit strategy tuned with the optimal parameters
corresponding to $\wt{L} \in \mathcal{L}$. However, for the sake of simplicity, we study only a particular
case of this general metho\-dology.
\medskip

\noindent
\textbf{Outline of the paper.}
In Section~\ref{sec:set}, we describe the setting and the classes of environments of interest,
establish a minimax lower bound on the achievable performance (Section~\ref{sec:minimax}),
and indicate how to achieve it when the global Lipschitz parameter $L$ is known (Section~\ref{sec:Lknown}).
Our main contribution (Section~\ref{sec:main}) is then a method to achieve it when the Lipschitz constant is unknown
for a slightly restricted class of Lipschitz functions.

\section{Setting and notation}
\label{sec:set}

We consider a $d$--dimensional compact set of arms, say, for simplicity, $\cX = [0,1]^d$,
where $d \geq 1$. With each arm $\ux \in [0,1]^d$
is associated a probability distribution $\nu_{\ux}$ with known bounded support, say $[0,1]$;
this defines an environment.
A key quantity of such an environment is
given by the expectations $f(\ux)$ of the distributions $\nu_{\ux}$.
They define a mapping $f : [0,1]^d \to [0,1]$, which we call the mean-payoff function.

At each round $t \geq 1$, the player chooses an arm $\uI_t \in [0,1]^d$
and gets a reward $Y_t$ sampled independently from $\nu_{\uI_t}$ (conditionally
on the choice of $\uI_t$). We call a strategy the (possibly randomized) rule that
indicates at each round which arm to pull given the history of past rewards.
\medskip

We write the elements $\ux$ of $[0,1]^d$ in columns;
$\ux^{\transpose}$ will thus denote a row vector with $d$ elements.

\begin{assumption}
\label{ass:1}
{\rm
We assume that $f$ is twice differentiable, with Hessians uniformly
bounded by $M$ in the following sense: for all $\ux \in [0,1]^d$
and all $\uy \in [0,1]^d$,
\[
\Bigl| \uy^{\transpose} \, H_f(\ux) \,\, \uy \Bigr| \leq M \, \norm[\uy]_\infty^2\,.
\]
The $\ell^1$--norm of the gradient $\norm[\nabla f]_1$ of $f$
is thus continuous and it achieves its maximum on $[0,1]^d$, whose value
is denoted by $L$. As a result, $f$ is Lipschitz with respect to the $\ell^\infty$--norm with constant $L$
(and $L$ is the smallest\footnote{The proof of the
approximation lemma will show why this is the case.}
constant for which it is Lipschitz): for all $\ux,\uy \in [0,1]^d$,
\[
\bigl| f(\ux) - f(\uy) \bigr| \leq L \, \norm[\ux - \uy]_\infty\,.
\]
}
\end{assumption}

In the sequel we denote by $\cF_{L,M}$ the set of environments whose mean-payoff functions
satisfy the above assumption.

We also denote by $\cF_{L}$ the larger set of environments whose
mean-payoff functions $f$ is only constrained to be $L$--Lipschitz with respect to the $\ell^\infty$--norm.

\subsection{The minimax optimal orders of magnitude of the regret}
\label{sec:minimax}

When $f$ is continuous, we denote by
\[
f^\star = \sup_{\ux \in [0,1]^d} f(\ux) = \max_{\ux \in [0,1]^d} f(\ux)
\]
the largest expected payoff in a single round. The expected regret $\oR_T$
at round $T$ is then defined as
\[
\oR_T = \E \! \left[ T f^\star - \sum_{t=1}^T Y_t \right]
= \E \! \left[ T f^\star - \sum_{t=1}^T f(\uI_t) \right]
\]
where we used the tower rule and where the expectations
are with respect to the random draws of the $Y_t$ according to
the $\nu_{\uI_t}$ as well as to any auxiliary randomization
the strategy uses.

In this article, we are interested in controlling the worst-case
expected regret over all environments of $\cF_L$.
The following minimax lower bound follows from a
straightforward adaptation of the proof of~\cite[Theorem~13]{HOO},
which\HALversion{ is provided in Section~\ref{sec:th:LB} in appendix.}\ALTversion{ is
omitted from this extended abstract and may be found in~\cite{BSYHAL}.}
(The adaptation is needed because the hypothesis on
the packing number is not exactly satisfied in the form stated in~\cite{HOO}.)

\begin{theorem}
\label{th:LB}
For all strategies of the player and for all
\[
T \geq \max \left\{ L^d, \,\, \left( \frac{0.15 \, L^{2/(d+2)}}{\max\{d,2\}} \right)^{\!\! d} \, \right\},
\]
the worst-case regret over the set $\cF_{L}$ of all environments that are
$L$--Lipschitz with respect to the $\ell^\infty$--norm is larger than
\[
\sup_{\cF_L} \oR_T \geq 0.15 \, L^{d/(d+2)}\,T^{(d+1)/(d+2)}\,.
\]
\end{theorem}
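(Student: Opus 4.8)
The plan is to follow the classical minimax lower-bound recipe for bandit problems: construct a finite family of hard environments that are pairwise hard to distinguish yet have well-separated optima, and then invoke an information-theoretic argument (Pinsker / the ``needle in a haystack'' reduction) to show that no strategy can identify the good arm quickly enough. First I would fix a resolution parameter $\eps > 0$ and partition $[0,1]^d$ into $N = \lfloor 1/\eps \rfloor^d$ subcubes of side $\eps$. To each subcube $i \in \{1,\dots,N\}$ I associate an environment $f_i$ which is identically equal to $1/2$ outside the $i$-th subcube and has a single smooth ``bump'' of height $h$ supported inside that subcube, of the form $h \cdot \psi\big(\norm[\ux - \ux_i]_\infty / \eps\big)$ for a fixed Lipschitz profile $\psi$ vanishing on the boundary; to keep each $f_i$ inside $\cF_L$ one needs $h \leq c\,L\,\eps$ for an absolute constant $c$ (this is why the bump height and the cube side must be coupled). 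The rewards are Bernoulli with these means. There is also the uniform ``null'' environment $f_0 \equiv 1/2$.

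Next I would lower-bound the regret of any strategy on environment $f_i$ by $h$ times the expected number of rounds, up to $T$, in which the played arm $\uI_t$ lies \emph{outside} the $i$-th subcube. The key step is then the standard change-of-measure / divergence computation: for a fixed strategy, let $N_i$ denote the number of pulls inside subcube $i$ under the null environment $f_0$; then $\sum_i \E_0[N_i] = T$, so on average $\E_0[N_i] \leq T/N$, and one such index is ``rarely visited.'' Using Pinsker's inequality together with the chain rule for Kullback--Leibler divergence, the law of the observations under $f_i$ is close in total variation to the law under $f_0$ whenever $\E_0[N_i]$ is small, with the divergence controlled by $\E_0[N_i]$ times the per-step KL divergence between a $\mathrm{Ber}(1/2)$ and a $\mathrm{Ber}(1/2+h)$ reward, which is $O(h^2)$. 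Consequently the strategy cannot concentrate its pulls on subcube $i$ under $f_i$ either, and its regret on $f_i$ is at least of order $h\,T\,(1 - \sqrt{T h^2 / N}\,)$, up to absolute constants.

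Finally I would optimize the free parameters. Taking $\eps$ so that $N \asymp 1/\eps^d$ and setting $h \asymp L\eps$ so that the bumps stay $L$-Lipschitz, the bound becomes of order $L\,\eps\,T$ provided $T h^2 \lesssim N$, i.e. $L^2 \eps^2 T \lesssim \eps^{-d}$, i.e. $\eps \lesssim (L^2 T)^{-1/(d+2)}$. Choosing $\eps \asymp (L^2 T)^{-1/(d+2)}$ makes the constraint tight and yields a regret bound of order $L \cdot (L^2 T)^{-1/(d+2)} \cdot T = L^{d/(d+2)} T^{(d+1)/(d+2)}$, which is exactly the claimed rate; chasing the constants (and the regimes of $T$ for which $N \geq 1$ and the bump is nondegenerate, which is where the two lower bounds on $T$ in the statement come from) produces the explicit $0.15$. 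The main obstacle is not any single estimate but the bookkeeping of constants through the change-of-measure argument and the verification that the constructed bump functions genuinely lie in $\cF_L$ with the right Lipschitz constant while still being separated enough to force regret --- this is precisely the point flagged in the excerpt, namely that the packing-number hypothesis of~\cite[Theorem~13]{HOO} must be re-examined and adapted to the $\ell^\infty$ geometry of $[0,1]^d$ rather than quoted verbatim.
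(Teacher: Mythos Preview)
Your proposal is correct and follows essentially the same route as the paper: the paper's proof simply invokes Step~5 of the proof of \cite[Theorem~13]{HOO}---which is exactly the needle-in-a-haystack construction with Bernoulli bump environments and the Pinsker/KL change-of-measure bound you sketch---and then carries out only the packing-number estimate $\mathcal{N}\bigl([0,1]^d,\,L\norm_\infty,\,\eps\bigr)=(\lfloor L/\eps\rfloor)^d$ and the optimization $\eps=\gamma\,L^{d/(d+2)}T^{-1/(d+2)}$ that you arrive at in your final paragraph. The only difference is packaging: the paper cites the intermediate inequality $\sup_{\cF_L}\oR_T\geq T\eps\bigl(0.5-2.2\,\eps\sqrt{T/(\lfloor L/\eps\rfloor)^d}\bigr)$ rather than re-deriving it, so your sketch is an unfolded version of the same argument.
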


The multiplicative constants are not optimized in this bound (a more careful proof
might lead to a larger constant in the lower bound).

\subsection{How to achieve a minimax optimal regret when $L$ is known}
\label{sec:Lknown}

In view of the previous section,
our aim is to design strategies with worst-case expected regret
$\sup_{\cF_{L}} \oR_T$ less than something of order $L^{d/(d+2)}\,T^{(d+1)/(d+2)}$
when $L$ is unknown.
A simple way to do so when $L$ is known was essentially proposed in the introduction of~\cite{Kle04} (in the case $d=1$);
it proceeds by discretizing the arm space.
The argument is reproduced below and can be used even when $L$ is unknown to recover the
optimal dependency $T^{(d+1)/(d+2)}$ on $T$ (but then, with a suboptimal dependency on $L$).
\medskip

We consider the approximations $\of_m$ of $f$ with $m^d$ regular hypercube bins in the
$\ell^\infty$--norm, i.e., $m$ bins are formed in each direction and combined
to form the hypercubes. Each of these hypercube bins is indexed by an
element $\uk = (k_1,\ldots,k_d) \in \{ 0,\ldots, m-1 \}^d$.
The average value of $f$ over the bin indexed by $\uk$ is denoted by
\[
\of_m(\uk) = m^d \int_{\uk/m+[0,1/m]^d} f(\ux) \,\d \ux\,.
\]

We then consider the following two-stage strategy, which is based
on some strategy MAB for multi-armed bandits; MAB will refer to a generic strategy
but we will instantiate below the obtained bound.
Knowing $L$ and assuming that $T$ is fixed and known in advance,
we may choose beforehand $m = \bigl\lceil L^{2/(d+2)} T^{1/(d+2)} \bigr\rceil$.
The decomposition of $[0,1]^d$ into $m^d$ bins thus obtained will play
the role of the finitely many arms of the multi-armed bandit problem.
At round $t \geq 1$, whenever the MAB strategy prescribes to pull
bin $\uK_t \in \{ 0,\ldots,m-1 \}^d$, then first, an arm $\uI_t$ is pulled at random in the hypercube
$\uK_t/m + [0,1/m]^d$;
and second, given $\uI_t$, the reward $Y_t$ is drawn at random
according to $\nu_{\uI_t}$. Therefore, given $\uK_t$, the reward $Y_t$ has an expected value of $\of_m(\uK_t)$.
Finally, the reward $Y_t$ is returned to the underlying MAB strategy.

Strategy MAB is designed to control the regret with respect to the best of the $m^d$ bins,
which entails that
\[
\E \left[ T \max_{\uk} \of_m(\uk) - \sum_{t=1}^T Y_t \right] \leq \psi\bigl(T,m^d\bigr)\,,
\]
for some function $\psi$ that depends on MAB.
Now, whenever $f$ is $L$--Lipschitz with respect to the $\ell^\infty$--norm,
we have that for all $\uk \in \{ 0,\ldots,m-1 \}^d$ and
all $\ux \in \uk/m + [0,1/m]^d$, the difference $\bigl| f(\ux) - \of_m(\uk) \bigr|$
is less than $L/m$; so that\footnote{Here, one could object that we only use the local Lipschitzness of $f$
around the point where it achieves its maximum; however, we need $f$ to be $L$--Lipschitz in
an $1/m$--neighborhood of this maximum, but the optimal value of $m$ depends on $L$. To
solve the chicken-egg problem, we restricted our attention to globally $L$--Lipschitz
functions, which, anyway, in view of Theorem~\ref{th:LB}, comes at no cost
as far as minimax-optimal orders of magnitude of the regret bounds in $L$ and $T$ are
considered.}
\[
\max_{\ux \in [0,1]^d} f(x) - \max_{\uk} \of_m(\uk) \leq \frac{L}{m}\,.
\]
All in all, for this MAB-based strategy, the regret is bounded by the sum of
the approximation term $L/m$ and of the regret term for multi-armed bandits,
\begin{multline}
\label{eq:bd}
\sup_{\cF_L} \oR_T \leq T \frac{L}{m} + \psi(T,m^d) \\ \leq L^{d/(d+2)} T^{(d+1)/(d+2)}
+ \psi\Bigl(T, \bigl( \bigl\lceil L^{2/(d+2)} T^{1/(d+2)} \bigr\rceil \bigr)^d \Bigr)\,.
\end{multline}
We now instantiate this bound.

The INF strategy of~\cite{AB10} (see also~\cite{ABL11}) achieves $\psi(T,m') = 2 \sqrt{2 T m'}$
and this entails a final $O \bigl( L^{d/(d+2)}\,T^{(d+1)/(d+2)} \bigr)$ bound in~(\ref{eq:bd}).
Note that for the EXP3 strategy of~\cite{ACFS02} or
the UCB strategy of~\cite{ACF02}, extra logarithmic terms of the order of $\ln T$
would appear in the bound.

\section{Achieving a minimax optimal regret not knowing $L$}
\label{sec:main}

In this section, our aim is to obtain a worst-case regret bound of the minimax-optimal order of
$L^{d/(d+2)}\,T^{(d+1)/(d+2)}$ even when $L$ is unknown.
To do so, it will be useful to first estimate $L$; we will provide a (rather crude) estimate
suited to our needs, as our goal is the minimization of the regret rather than the best
possible estimation of $L$. Our method is based on the following approximation results.

For the estimation to be efficient, it will be convenient to restrict our attention
to the subset $\cF_{L,M}$ of $\cF_L$, i.e., we will consider the additional assumptions
on the existence and boundedness of the Hessians asserted in Assumption~\ref{ass:1}.
However, the obtained regret bound (Theorem~\ref{th:main}) will suffer from some
(light) dependency on $M$ but will have the right orders of magnitude in $T$ and $L$;
the forecaster used to achieve it depends neither on $L$ nor on $M$ and is fully adaptive.

\subsection{Some preliminary approximation results}

We still consider the approximations $\of_m$ of $f$ over $[0,1]^d$ with $m^d$ regular bins.
We then introduce the following approximation of $L$:
\[
\oL_m = m \max_{\uk \in \{1,\hdots,m-2\}^d} \, \max_{\us \in \{-1,1\}^d} \,
\Bigl| \of_m(\uk) - \of_m(\uk + \us) \Bigr|\,.
\]
This quantity provides a fairly good approximation of the Lipschitz constant, since $m\left(\of_m(\uk) - \of_m(\uk + \us)\right)$  is an estimation of the (average) derivative of $f$ in bin $\uk$ and direction $\us$.

The lemma below relates precisely $\oL_m$ to $L$: as $m$ increases, $\oL_m$ converges to~$L$.

\begin{lemma}
\label{lm:approx}
If $f \in \cF_{L,M}$ and $m \geq 3$, then
\[
L - \frac{7 M}{m} \leq \oL_m \leq L\,.
\]
\end{lemma}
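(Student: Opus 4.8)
The plan is to establish the two inequalities separately, with the upper bound $\oL_m \leq L$ being the easier one and the lower bound $\oL_m \geq L - 7M/m$ requiring the Taylor-type estimates coming from Assumption~\ref{ass:1}. For the upper bound, I would fix $\uk \in \{1,\dots,m-2\}^d$ and $\us \in \{-1,1\}^d$ and write
\[
\of_m(\uk) - \of_m(\uk+\us) = m^d \int_{[0,1/m]^d} \bigl( f(\uk/m + \uu) - f((\uk+\us)/m + \uu) \bigr) \,\d\uu\,.
\]
Since the two arguments of $f$ inside the integral differ by $\us/m$, which has $\ell^\infty$--norm exactly $1/m$, the $L$--Lipschitzness of $f$ (guaranteed by Assumption~\ref{ass:1}) gives that the integrand is bounded in absolute value by $L/m$; integrating and multiplying by $m$ yields $\bigl| m(\of_m(\uk) - \of_m(\uk+\us)) \bigr| \leq L$, hence $\oL_m \leq L$.

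For the lower bound, the idea is to go the other way: pick a point $\ux^\star \in [0,1]^d$ where $\norm[\nabla f(\ux^\star)]_1 = L$, and a sign vector $\us \in \{-1,1\}^d$ such that $\us^\transpose \nabla f(\ux^\star) = \norm[\nabla f(\ux^\star)]_1 = L$ (take $s_i = \mathrm{sign}(\partial_i f(\ux^\star))$). Then I would choose the bin index $\uk$ so that $\ux^\star$ lies (roughly) in the bin $\uk/m + [0,1/m]^d$ and moreover $\uk \in \{1,\dots,m-2\}^d$, so that both $\uk$ and $\uk+\us$ are legal indices — this is where one needs $m \geq 3$ and where a little care is required near the boundary of the hypercube (if $\ux^\star$ is within $1/m$ of a face, shift $\uk$ by one, which moves the reference point by at most $2/m$ from $\ux^\star$; this is absorbed into the $M$-term). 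Writing $g(\uu) = f(\uk/m + \uu) - f((\uk+\us)/m + \uu)$, I have $\of_m(\uk) - \of_m(\uk+\us) = m^d\int_{[0,1/m]^d} g(\uu)\,\d\uu$, and the point is to show that $g(\uu)$ is close to $L/m$ for every $\uu$ in the cube. A first-order Taylor expansion of $f$ around $\ux^\star$ with the Hessian bound from Assumption~\ref{ass:1} gives, for any two points $\uy, \uz \in [0,1]^d$,
\[
\bigl| f(\uy) - f(\uz) - (\uy-\uz)^\transpose \nabla f(\ux^\star) \bigr| \leq \tfrac{M}{2}\bigl( \norm[\uy - \ux^\star]_\infty^2 + \norm[\uz - \ux^\star]_\infty^2 \bigr)\,,
\]
or something of this flavour; applied with $\uy = \uk/m + \uu$, $\uz = (\uk+\us)/m + \uu$, the linear term is exactly $-\tfrac{1}{m}\us^\transpose \nabla f(\ux^\star) = -L/m$ up to sign, and since all the relevant points are within $O(1/m)$ of $\ux^\star$ (using that $\ux^\star$ is in or adjacent to bin $\uk$), the quadratic remainder is $O(M/m^2)$. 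Integrating over $\uu$, multiplying by $m$, and tracking the constants carefully should yield $m\bigl(\of_m(\uk) - \of_m(\uk+\us)\bigr) \geq L - 7M/m$ (with the appropriate sign convention), hence $\oL_m \geq L - 7M/m$.

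The main obstacle, and the only genuinely delicate point, is bookkeeping the constant $7$: one must handle the boundary case where $\ux^\star$ is too close to a face of $[0,1]^d$ for the naive choice of $\uk$ to land in $\{1,\dots,m-2\}^d$, quantify exactly how far the shifted reference point can be from $\ux^\star$ (at most something like $2/m$ in each coordinate), and then bound each of the quadratic remainder terms $\norm[\,\cdot\, - \ux^\star]_\infty^2$ by a constant times $1/m^2$. Summing these contributions — the shift of the bin, the spread $1/m$ of points within a bin, and the factor $1/2$ in front of the Hessian bound — is what produces the numerical constant; $7$ is presumably a convenient (non-optimized) upper bound for whatever exact figure the calculation gives. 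Everything else is routine: the Lipschitz property for the upper bound and a single second-order Taylor expansion for the lower bound.
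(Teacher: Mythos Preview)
Your proposal is correct and follows essentially the same argument as the paper: the upper bound via the $L$--Lipschitz property applied under the integral, and the lower bound by picking $\ux^\star$ with $\norm[\nabla f(\ux^\star)]_1 = L$, the sign vector $\us$ realizing this $\ell^1$--norm, the nearest interior bin $\uk \in \{1,\dots,m-2\}^d$ (at $\ell^\infty$--distance at most $2/m$ from $\ux^\star$), and a second-order Taylor expansion of both $f(\ux)$ and $f(\ux+\us/m)$ around $\ux^\star$. The constant $7$ arises exactly as you anticipate: the two remainder terms are bounded by $\tfrac{M}{2}\bigl((2/m)^2 + (3/m)^2\bigr) = 13M/(2m^2) \leq 7M/m^2$.
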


\begin{proof}
We note that for all $\uk \in \{1,\hdots,m-2\}^d$ and $\us \in \{-1,1\}^d$,
we have by definition
\begin{multline}
\nonumber
\Bigl| \of_m(\uk) - \of_m(\uk + \us) \Bigr|
= m^d \left| \int_{\uk/m + [0,1/m]^d} \Bigl( f(\ux) - f\bigl(\ux + \us/m\bigr) \Bigr) \,\d \ux \, \right| \\
\leq m^d \int_{\uk/m + [0,1/m]^d} \Bigl| f(\ux) - f\bigl(\ux + \us/m\bigr) \Bigr| \,\d \ux\,.
\end{multline}
Now, since $f$ is $L$--Lipschitz in the $\ell^\infty$--norm, it holds that
\[
\Bigl| f(\ux) - f\bigl(\ux + \us/m\bigr) \Bigr|
\leq L \bigl\Arrowvert \us/m \bigr\Arrowvert_\infty = \frac{L}{m}\,;
\]
integrating this bound entails the stated upper bound $L$ on $\oL_m$. \medskip

For the lower bound, we first denote by $\ux_\star \in [0,1]^d$ a
point such that $\bigl\Arrowvert \nabla f(\ux_\star) \bigr\Arrowvert_1 = L$.
(Such a point always exists, see Assumption~\ref{ass:1}.)
This point belongs to some bin in $\{ 0,\ldots,m-1\}^d$; however, the
closest bin $\uk^\star_m$ in $\{1,\hdots,m-2\}^d$ is such that
\begin{equation}
\label{eq:kstar}
\forall \ux \in \uk^\star_m/m +[0,1/m]^d, \qquad
\Arrowvert \ux - \ux_\star \Arrowvert_{\infty} \leq \frac{2}{m}.
\end{equation}
Note that this bin $\uk^\star_m$ is such that all $\uk^\star_m + \us$
belong to $\{ 0,\ldots,m-1 \}^d$ and hence legally index hypercube bins,
when $\us \in \{-1,1\}^d$.
Now, let $\us^\star_m \in \{-1,1\}^d$ be such that
\begin{equation}
\label{eq:nabla}
\nabla f(\ux_\star) \,\cdot\, \us^\star_m =  \bigl\Arrowvert \nabla f(\ux_\star) \bigr\Arrowvert_1 = L\,,
\end{equation}
where $\,\cdot\,$ denotes the inner product in $\mathbb{R}^d$.
By the definition of $\oL_m$ as some maximum,
\begin{multline}
\label{eq:1}
\oL_m \geq m \, \Bigl| \of_m\bigl(\uk^\star_m\bigr) - \of_m\bigl(\uk^\star_m + \us^\star_m\bigr) \Bigr| \\
= m \times m^{d} \, \left| \int_{\uk^\star_m/m + [0,1/m]^d} \Bigl( f(\ux) -
f\bigl(\ux + \us^\star_m/m \bigr) \Bigr) \,\d \ux \, \right|.
\end{multline}

Now, Taylor's theorem (in the mean-value form for real-valued twice differentiable
functions of possibly several variables) shows that for
any $\ux \in \uk^\star_m/m + [0,1/m]^d$, there exists two elements $\xi$ and $\zeta$,
belonging respectively to the segments between $\ux$ and $\ux_\star$, on the one hand,
between $\ux_\star$ and $\ux+\us_m^\star/m$ on the other hand, such that
\begin{align*}
& f(\ux) - f\bigl(\ux + \us^\star_m/m \bigr) \\
& = \bigl( f(\ux) - f(\ux_\star) \bigr) + \Bigl( f(\ux_\star) - f\bigl(\ux + \us^\star_m/m \bigr) \Bigr) \\
& = \nabla f(\ux_\star) \,\cdot\, (\ux - \ux_\star)
+ \frac{1}{2} (\ux - \ux_\star)^{\transpose} \,\, H_f(\xi) \,\, (\ux - \ux_\star) \\
& \quad - \nabla f(\ux_\star) \,\cdot\, \bigl(\ux + \us^\star_m/m - \ux_\star \bigr)
- \frac{1}{2} \bigl(\ux + \us^\star_m/m - \ux_\star \bigr)^{\transpose} \,\, H_f(\zeta) \,\,
\bigl(\ux + \us^\star_m/m - \ux_\star \bigr) \\
& = - \nabla f(\ux_\star) \,\cdot\, \frac{\us^\star_m}{m} \ +
\frac{1}{2} (\ux - \ux_\star)^{\transpose} \,\, H_f(\xi) \,\, (\ux - \ux_\star) \\
& \hspace{3cm} - \frac{1}{2} \bigl(\ux + \us^\star_m/m - \ux_\star \bigr)^{\transpose} \,\, H_f(\zeta) \,\,
\bigl(\ux + \us^\star_m/m - \ux_\star \bigr)\,.
\end{align*}
Using~\eqref{eq:nabla} and substituting the bound on the Hessians stated in Assumption~\ref{ass:1}, we get
\[
f(\ux) - f\bigl(\ux + \us^\star_m/m \bigr) \leq - \frac{L}{m} + \frac{M}{2} \norm[\ux - \ux_\star]_\infty^2
+ \frac{M}{2} \bigl\Arrowvert \ux + \us^\star_m/m - \ux_\star \bigr\Arrowvert_\infty^2\,;
\]
substituting~\eqref{eq:kstar}, we get
\[
f(\ux) - f\bigl(\ux + \us^\star_m/m \bigr) \leq - \frac{L}{m} + \frac{M}{2 m^2} \bigl( 2^2 + 3^2 \bigr) \leq - \frac{L}{m} + \frac{7 M}{m^2} \leq 0\,,
\]
where the last inequality holds with no loss of generality (if it does not, then the lower bound on $\oL_m$
in the statement of the lemma is trivial).
Substituting and integrating this equality in~\eqref{eq:1} and using the triangle inequality, we get
\begin{eqnarray*}
\oL_m \geq L - \frac{7 M}{m}.
\end{eqnarray*}
This concludes the proof.
\qed
\end{proof}

\subsection{A strategy in two phases}

Our strategy is described in Figure~\ref{fig:strat}; several
notation that will be used in the statements and proofs of
some results below are defined therein. Note that we proceed in two phases:
a pure exploration phase, when we estimate $L$ by some $\wt{L}_m$, and an exploration--exploitation
phase, when we use a strategy designed for the case of finitely-armed bandits on
a discretized version of the arm space. (The discretization step depends on the estimate
obtained in the pure exploration phase.)
\begin{figure}[p]
\rule{\linewidth}{.5pt}
{\small
\emph{Parameters}:
\begin{itemize}
\item Number $T$ of rounds;
\item Number $m$ of bins (in each direction) considered in the pure exploration phase;
\item Number $E$ of times each of them must be pulled;
\item A multi-armed bandit strategy MAB (taking as inputs a number $m^d$
of arms and possibly other parameters).
\end{itemize}
\medskip

\emph{Pure exploration phase}:
\begin{enumerate}
\item For each $\uk \in \{ 0,\ldots,m-1 \}^d$
\begin{itemize}
\item pull $E$ arms independently uniformly
at random in $\uk/m + [0, 1/m]^d$ and get $E$ associated rewards
$Z_{\uk,j}$, where $j \in \{ 1,\ldots,E \}$;
\item compute the average reward for bin $\uk$,
\[
\wh{\mu}_{\uk} = \frac{1}{E} \sum_{j = 1}^E Z_{\uk,j}\,;
\]
\end{itemize}
\item Set
\[
\wh{L}_m = m \max_{\uk \in \{1,\hdots,m-2\}^d} \max_{\us \in \{-1,1\}^d}
\bigl| \wh{\mu}_{\uk} - \wh{\mu}_{\uk + \us} \bigr|
\vspace{.15cm}
\]
and define $\ \displaystyle{\wt{L}_m = \wh{L}_m + m \sqrt{\frac{2}{E} \ln (2 m^d T)}} \ $
as well as $\ \wt{m} = \Bigl\lceil \wt{L}_m^{2/(d+2)} T^{1/(d+2)} \Bigr\rceil$.
\end{enumerate}
\medskip

\emph{Exploration--exploitation phase}: \smallskip \\
Run the strategy MAB with $\wt{m}^d$ arms
as follows; for all $t = Em+1,\ldots,T$,
\begin{enumerate}
\item If MAB prescribes to play arm $\uK_t \in \bigl\{ 0,\ldots,\wt{m}-1 \bigr\}^d$,
pull an arm $\uI_t$ at random in $\uK_t/m + [0, 1/m]^d$;
\item Observe the associated payoff $Y_t$, drawn independently according to $\nu_{\uI_t}$;
\item Return $Y_t$ to the strategy MAB.
\end{enumerate}
}
\rule{\linewidth}{.5pt}
\caption{\label{fig:strat} The considered strategy.}
\end{figure}

The first step in the analysis is to relate $\wt{L}_m$ and $\wh{L}_m$
to the quantity they are estimating, namely $\oL_m$.

\begin{lemma}
\label{lm:conc}
With probability at least $1-\delta$,
\[
\bigl| \wh{L}_m - \oL_m \bigr| \leq m \sqrt{\frac{2}{E} \ln \frac{2 m^d}{\delta}}\,.
\]
\end{lemma}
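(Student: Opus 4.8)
The plan is to observe that each empirical average $\wh{\mu}_{\uk}$ is an unbiased estimator of $\of_m(\uk)$, to control the deviations uniformly over all $m^d$ bins via Hoeffding's inequality and a union bound, and finally to transfer this control from the $\wh{\mu}_{\uk}$ to the maxima defining $\wh{L}_m$ and $\oL_m$.

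First I would note that, conditionally on nothing, each reward $Z_{\uk,j}$ is obtained by drawing an arm uniformly at random in $\uk/m + [0,1/m]^d$ and then a payoff according to the corresponding $\nu$; hence $\E[Z_{\uk,j}] = m^d \int_{\uk/m+[0,1/m]^d} f(\ux)\,\d\ux = \of_m(\uk)$, and the $Z_{\uk,j}$, $j = 1,\dots,E$, are i.i.d.\ and $[0,1]$--valued. Therefore $\wh{\mu}_{\uk}$ is an average of $E$ independent $[0,1]$--valued random variables with common mean $\of_m(\uk)$, and Hoeffding's inequality gives, for each fixed $\uk$ and each $\eta > 0$,
\[
\mathbb{P}\Bigl( \bigl| \wh{\mu}_{\uk} - \of_m(\uk) \bigr| > \eta \Bigr) \leq 2 \exp\bigl( - 2 E \eta^2 \bigr)\,.
\]
A union bound over the $m^d$ bins, with the choice $\eta = \sqrt{(1/(2E)) \ln(2 m^d/\delta)}$ (so that $2 m^d \exp(-2E\eta^2) = \delta$), shows that with probability at least $1-\delta$ the event
\[
\Omega = \Bigl\{ \forall\, \uk \in \{0,\dots,m-1\}^d, \quad \bigl| \wh{\mu}_{\uk} - \of_m(\uk) \bigr| \leq \sqrt{\tfrac{1}{2E} \ln \tfrac{2 m^d}{\delta}} \Bigr\}
\]
holds.

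On $\Omega$, for every pair $(\uk,\us)$ with $\uk \in \{1,\dots,m-2\}^d$ and $\us \in \{-1,1\}^d$, the triangle inequality yields
\[
\Bigl| \, \bigl| \wh{\mu}_{\uk} - \wh{\mu}_{\uk+\us} \bigr| - \bigl| \of_m(\uk) - \of_m(\uk+\us) \bigr| \, \Bigr|
\leq \bigl| \wh{\mu}_{\uk} - \of_m(\uk) \bigr| + \bigl| \wh{\mu}_{\uk+\us} - \of_m(\uk+\us) \bigr|
\leq \sqrt{\tfrac{2}{E} \ln \tfrac{2 m^d}{\delta}}\,.
\]
Since $\wh{L}_m/m$ and $\oL_m/m$ are the maxima of the left-hand and right-hand quantities over the same finite index set $(\uk,\us)$, and since $|\max_i a_i - \max_i b_i| \leq \max_i |a_i - b_i|$, we conclude that on $\Omega$,
\[
\Bigl| \tfrac{1}{m}\wh{L}_m - \tfrac{1}{m}\oL_m \Bigr| \leq \sqrt{\tfrac{2}{E} \ln \tfrac{2 m^d}{\delta}}\,,
\]
which is the claimed bound after multiplying through by $m$.

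\textbf{Expected main obstacle.} There is no deep difficulty here; the only point requiring care is the unbiasedness claim $\E[\wh{\mu}_{\uk}] = \of_m(\uk)$, which relies on the two-stage randomization (uniform arm inside the bin, then noisy payoff) matching exactly the averaging that defines $\of_m$, and the bookkeeping that ensures Hoeffding applies to genuinely i.i.d.\ $[0,1]$--valued summands. The rest is the union bound over $m^d$ bins and the elementary stability of the max under uniform perturbations.
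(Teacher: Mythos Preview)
Your proof is correct and follows essentially the same route as the paper: Hoeffding on each bin's empirical mean, a union bound over the $m^d$ bins, and then the triangle inequality to pass from uniform control of $|\wh{\mu}_{\uk}-\of_m(\uk)|$ to control of the difference of the two maxima. You have merely spelled out the last step (via $|\max_i a_i - \max_i b_i| \leq \max_i |a_i - b_i|$) a bit more explicitly than the paper does.
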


\begin{proof}
We consider first a fixed $\uk \in \{ 0,\ldots,m-1 \}^d$;
as already used in Section~\ref{sec:Lknown}, the $Z_{\uk,j}$
are independent and identically distributed according to a distribution
on $[0,1]$ with expectation $\of_m(\uk)$, as $j$ varies between $1$ and $E$.
Therefore, by Hoeffding's inequality, with probability at least $1-\delta/m^d$
\[
\bigl| \wh{\mu}_{\uk} - \of_m(\uk) \bigr| \leq \sqrt{\frac{1}{2E} \ln \frac{2m^d}{\delta}}\,.
\]

Performing a union bound and using the triangle inequality, we get that with
probability at least $1-\delta$,
\[
\forall \uk,\uk' \in \{ 0,\ldots,m-1 \}, \qquad \Bigl| \,\, \bigl| \wh{\mu}_{\uk} - \wh{\mu}_{\uk'} \bigr| -
\bigl| \of_m(\uk) - \of_m(\uk') \bigr| \,\, \Bigr| \leq \sqrt{\frac{2}{E} \ln \frac{2 m^d}{\delta}}\,.
\]
This entails the claimed bound.
\qed
\end{proof}

By combining Lemmas~\ref{lm:approx} and~\ref{lm:conc}, we
get the following inequalities on $\wt{L}_m$, since the latter
is obtained from $\wh{L}_m$ by adding a deviation term.

\begin{corollary}
\label{cor:wtL}
If $f \in \cF_{L,M}$ and $m \geq 3$, then, with probability at least $1-1/T$,
\[
L - \frac{7 M}{m} \leq \wt{L}_m \leq L + 2 m\sqrt{\frac{2}{E} \ln \bigl( 2 m^d T \bigr)}\,.
\]
\end{corollary}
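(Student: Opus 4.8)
The plan is simply to instantiate Lemma~\ref{lm:conc} at confidence level $\delta = 1/T$ and combine it with the two-sided control of $\oL_m$ given by Lemma~\ref{lm:approx}, while carefully tracking the deterministic deviation term that is added in passing from $\wh{L}_m$ to $\wt{L}_m$. Write
\[
\eps_m = m \sqrt{\frac{2}{E}\,\ln\bigl(2 m^d T\bigr)}
\]
for this deviation term, so that by the very definition of $\wt{L}_m$ in Figure~\ref{fig:strat} we have $\wt{L}_m = \wh{L}_m + \eps_m$. Applying Lemma~\ref{lm:conc} with $\delta = 1/T$ (so that $2m^d/\delta = 2m^dT$) shows that on an event of probability at least $1 - 1/T$ one has $\bigl|\wh{L}_m - \oL_m\bigr| \leq \eps_m$.

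On this event, the upper bound is obtained from $\wh{L}_m \leq \oL_m + \eps_m \leq L + \eps_m$, where the last step uses the inequality $\oL_m \leq L$ of Lemma~\ref{lm:approx}; adding $\eps_m$ gives $\wt{L}_m \leq L + 2\eps_m$, which is exactly the claimed right-hand side. For the lower bound, still on the same event, $\wh{L}_m \geq \oL_m - \eps_m \geq L - 7M/m - \eps_m$ by the lower bound of Lemma~\ref{lm:approx} (legitimate since $m \geq 3$ and $f \in \cF_{L,M}$); adding back $\eps_m$ cancels the $-\eps_m$ and yields $\wt{L}_m \geq L - 7M/m$.

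There is essentially no obstacle here: the only point requiring a moment of care is to notice that the deviation term furnished by Lemma~\ref{lm:conc} at level $\delta = 1/T$ is precisely the quantity $\eps_m$ that was added in the definition of $\wt{L}_m$, so that it cancels exactly in the lower bound and merely doubles in the upper bound, and to carry the hypotheses $m \geq 3$ and $f \in \cF_{L,M}$ (needed for Lemma~\ref{lm:approx}) verbatim into the statement.
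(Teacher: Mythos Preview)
Your proof is correct and follows exactly the approach the paper indicates (which is only the one-sentence remark preceding the corollary): instantiate Lemma~\ref{lm:conc} at $\delta = 1/T$, observe that the resulting deviation is precisely the $\eps_m$ added in the definition of $\wt{L}_m$, and combine with the two-sided bound of Lemma~\ref{lm:approx}. Nothing to add.
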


We state a last intermediate result; it relates the regret of the strategy
of Figure~\ref{fig:strat} to the regret of the strategy MAB that it takes
as a parameter.

\begin{lemma}
\label{lm:3}
Let $\psi(T',m')$ be a distribution-free upper bound on the expected
regret of the strategy MAB, when run for $T'$ rounds on a multi-armed bandit
problem with $m'$ arms, to which payoff distributions over $[0,1]$ are associated.
The expected regret of the strategy defined in Figure~\ref{fig:strat} is then
bounded from above as
\[
\sup_{\cF_L} \oR_T \leq E m^d +
\E \left[ \frac{L T}{\wt{m}} + \psi \bigl( T-Em^d, \, \wt{m}^d \bigr) \right].
\]
\end{lemma}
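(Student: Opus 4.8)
The plan is to split the regret of the two-phase strategy of Figure~\ref{fig:strat} according to its two phases and to bound each part separately. Writing
\[
\oR_T = \E\left[ \sum_{t=1}^{Em^d} \bigl(f^\star - f(\uI_t)\bigr) \right]
+ \E\left[ \sum_{t=Em^d+1}^{T} \bigl(f^\star - f(\uI_t)\bigr) \right],
\]
the first (pure-exploration) sum contains $Em^d$ terms, each of which lies in $[-1,1]$ since $f$ takes values in $[0,1]$; it is therefore bounded by $Em^d$. All the difficulty is thus to show that the second (exploration--exploitation) sum is bounded by $\E\bigl[\, LT/\wt{m} + \psi(T-Em^d,\wt{m}^d) \,\bigr]$.

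To this end I would condition on $\wt{m}$, which is a random variable measurable with respect to the rewards gathered during the pure-exploration phase. Given the value of $\wt{m}$, the exploration--exploitation phase is exactly an instance of the generic MAB reduction described in Section~\ref{sec:Lknown}, run for $T' = T - Em^d$ rounds with $m' = \wt{m}^d$ arms: whenever MAB pulls bin $\uK_t$, the returned reward $Y_t$ is, conditionally on $\uK_t$, distributed on $[0,1]$ with expectation $\of_{\wt{m}}(\uK_t)$, and these rewards are independent —across rounds and across the fresh internal randomization used to pick $\uI_t$ inside the bins— of the pure-exploration phase. Hence the defining property of MAB yields, conditionally on $\wt{m}$,
\[
\E\left[ (T-Em^d)\,\max_{\uk} \of_{\wt{m}}(\uk) - \sum_{t=Em^d+1}^{T} Y_t \ \Big|\ \wt{m} \right]
\leq \psi\bigl(T-Em^d,\,\wt{m}^d\bigr).
\]
Using the tower rule to replace each $f(\uI_t)$ by $Y_t$ inside the conditional expectation, and then adding and subtracting $(T-Em^d)\max_{\uk}\of_{\wt{m}}(\uk)$, the conditional version of the second sum equals
\[
(T-Em^d)\Bigl(f^\star - \max_{\uk}\of_{\wt{m}}(\uk)\Bigr)
+ \E\left[ (T-Em^d)\,\max_{\uk}\of_{\wt{m}}(\uk) - \sum_{t=Em^d+1}^{T} Y_t \ \Big|\ \wt{m} \right].
\]
Since every $f \in \cF_L$ is $L$--Lipschitz in the $\ell^\infty$--norm, the approximation argument already used in Section~\ref{sec:Lknown} gives $f^\star - \max_{\uk}\of_{\wt{m}}(\uk) \leq L/\wt{m}$, so the first term above is at most $(T-Em^d)\,L/\wt{m} \leq LT/\wt{m}$, while the second is at most $\psi(T-Em^d,\wt{m}^d)$ by the displayed MAB bound. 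Taking the expectation over $\wt{m}$ and adding the $Em^d$ contribution of the pure-exploration phase yields the claimed inequality; it holds for each fixed $f \in \cF_L$ (the strategy depending neither on $L$ nor on $f$), hence also after taking the supremum over $\cF_L$.

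The only delicate point is this conditioning step: one must check that, conditionally on the value of $\wt{m}$ produced by the first phase, the second phase is genuinely a fresh multi-armed bandit instance with $\wt{m}^d$ arms and bounded, per-arm i.i.d.\ payoffs, so that the distribution-free guarantee $\psi(\,\cdot\,,\,\cdot\,)$ of MAB applies with $\wt{m}^d$ substituted for the number of arms. Once this measurability/independence bookkeeping is settled, the remainder is just the regret decomposition of Section~\ref{sec:Lknown} carried out inside a conditional expectation, together with the elementary bound $Em^d$ on the exploration phase.
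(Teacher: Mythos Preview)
Your proposal is correct and follows essentially the same route as the paper's own proof: bound the pure-exploration phase trivially by $Em^d$, then condition on $\wt m$ and apply the discretization argument of Section~\ref{sec:Lknown} together with the distribution-free MAB guarantee, concluding via the tower rule. Your write-up is in fact more explicit than the paper's (which is quite terse and even contains a typo, writing $m$ instead of $\wt m$ in the approximation term), and your flagging of the conditioning/independence bookkeeping as the only delicate point is exactly right.
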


\begin{proof}
As all payoffs lie in $[0,1]$, the regret during the pure exploration phase is bounded by the
total length $E m^d$ of this phase.

Now, we bound the (conditionally) expected regret of the MAB strategy
during the exploration--exploitation phase; the conditional expectation
is with respect to the pure exploration phase and is used to fix the value of $\wt{m}$.
Using the same arguments as in Section~\ref{sec:Lknown},
the regret during this phase, which lasts $T-E m^d$ rounds, is bounded against any environment
in $\cF_{L}$ by
\[
L \frac{T - E m^d}{m} + \psi \bigl( T-Em^d, \, \wt{m}^d \bigr)\,.
\]
The tower rule concludes the proof.
\qed
\end{proof}

We are now ready to state our main result. Note that it is somewhat
unsatisfactory as the main regret bound~\eqref{eq:bd2}
could only be obtained on the restricted class~$\cF_{L,M}$
and depends (in a light manner, see comments below) on the parameter $M$,
while having the optimal orders of magnitude in $T$ and $L$.
These drawbacks might be artifacts of the analysis
and could be circumvented, perhaps in the light of the proof of the lower bound (Theorem~\ref{th:LB}),
which exhibits the worst-case elements of $\cF_L$ (they seem to belong to
some set $\cF_{L,M_L}$, where $M_L$ is a decreasing function of $L$).

Note howewer that the dependency on $M$ in~\eqref{eq:bd2} is in the additive
form. On the other hand, by adapting the argument of Section~\ref{sec:Lknown},
one can prove distribution-free regret bounds on $\cF_{L,M}$ with an improved order of magnitude as far
as $T$ is concerned but at the cost of getting $M$ as a multiplicative constant in the picture.
While the corresponding bound might be better in some regimes, we consider here
$\cF_{L,M}$ instead of $\cF_L$ essentially to remove pathological functions,
and as such we want the weakest dependency on $M$.

\begin{theorem}
\label{th:main}
When used with the multi-armed strategy INF, the strategy
of Figure~\ref{fig:strat} ensures that
\begin{multline}
\label{eq:bd1}
\sup_{\cF_{L,M}} \oR_T \leq T^{(d+1)/(d+2)} \! \left( 9 \, L^{d/(d+2)} +
5 \left( 2 m\sqrt{\frac{2}{E} \ln \bigl( 2 T^{d+1} \bigr)} \right)^{\!\! d/(d+2)} \right) \\
+ E m^d + 2 \sqrt{2 T d^d} + 1
\end{multline}
as soon as
\begin{equation}
\label{eq:cdtheo}
m \geq \frac{8 M}{L}\,.
\end{equation}
In particular, for
\begin{equation}
\label{eq:alphagamma}
0 < \gamma < \frac{d(d+1)}{(3d+2)(d+2)}
\qquad \mbox{and} \qquad
\alpha = \frac{1}{d+2} \left( \frac{d+1}{d+2} - \gamma \, \frac{3d+2}{d} \right) > 0\,,
\end{equation}
the choices of $m = \lfloor T^\alpha \rfloor$ and $E = m^2 \, \bigl\lceil T^{2 \gamma (d+2)/d} \bigr\rceil$
yield the bound
\begin{equation}
\label{eq:bd2}
\sup_{\cF_{L,M}} \oR_T \leq \max \Biggl\{ \left( \frac{8M}{L} + 1 \right)^{\!\! 1/\alpha}, \ L^{d/(d+2)} \, T^{(d+1)/(d+2)}
\, \bigl( 9 + \varepsilon(T,d) \bigr) \Biggr\}\,,
\end{equation}
where
\[
\varepsilon(T,d) = 5 T^{-\gamma} \bigl( \ln ( 2T^d ) \bigr)^{d/(d+2)} + T^{-\gamma}
+ \frac{ 2\sqrt{2 d^d \, T} + 1}{T^{- (d+1)/(d+2)}}
\]
vanishes as $T$ tends to infinity.
\end{theorem}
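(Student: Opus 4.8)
The plan is to assemble Theorem~\ref{th:main} from the three intermediate results already established: the approximation bound on $\oL_m$ (Lemma~\ref{lm:approx}), the concentration bound giving $\wt{L}_m$ (Corollary~\ref{cor:wtL}), and the regret decomposition in terms of $\psi$ (Lemma~\ref{lm:3}), instantiated with the INF bound $\psi(T',m') = 2\sqrt{2T'm'}$. First I would start from Lemma~\ref{lm:3}, bound $T - Em^d \leq T$ inside $\psi$, and write $\sup_{\cF_{L,M}} \oR_T \leq Em^d + \E\bigl[ LT/\wt{m} + 2\sqrt{2 T \wt{m}^d}\,\bigr]$. The two $\wt{m}$-dependent terms must now be controlled; since $\wt{m} = \lceil \wt{L}_m^{2/(d+2)} T^{1/(d+2)}\rceil$, one has the deterministic lower bound $\wt{m} \geq \wt{L}_m^{2/(d+2)} T^{1/(d+2)}$, giving $LT/\wt{m} \leq L \, \wt{L}_m^{-2/(d+2)} T^{(d+1)/(d+2)}$, and an upper bound $\wt{m} \leq \wt{L}_m^{2/(d+2)} T^{1/(d+2)} + 1$ for the second term. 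On the high-probability event of Corollary~\ref{cor:wtL}, which holds with probability $\geq 1-1/T$, I would use the lower bound $\wt{L}_m \geq L - 7M/m \geq L/8$ (valid precisely because of condition~\eqref{eq:cdtheo}, $m \geq 8M/L$, since then $7M/m \leq 7L/8$) to control $LT/\wt{m}$, and the upper bound $\wt{L}_m \leq L + 2m\sqrt{(2/E)\ln(2m^dT)}$ together with subadditivity of $x \mapsto x^{d/(d+2)}$ (an exponent in $(0,1)$) to split $\wt{L}_m^{d/(d+2)} \leq L^{d/(d+2)} + \bigl(2m\sqrt{(2/E)\ln(2m^dT)}\bigr)^{d/(d+2)}$ in the $2\sqrt{2T\wt{m}^d}$ term.

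The numerical constants then need shepherding: $L \cdot (L/8)^{-2/(d+2)} = 8^{2/(d+2)} L^{d/(d+2)} \leq 8 L^{d/(d+2)}$ for all $d \geq 1$, and $2\sqrt{2T\wt{m}^d} \leq 2\sqrt{2T}\,(\wt{L}_m^{2/(d+2)}T^{1/(d+2)}+1)^{d/2}$; expanding the latter, the leading piece is $2\sqrt 2\, T^{(d+1)/(d+2)} \wt{L}_m^{d/(d+2)}$ and the correction, after bounding $(a+b)^{d/2} \leq \ldots$, contributes the additive $2\sqrt{2Td^d}$-type term (here one uses $\wt{m} \leq$ something comparable to $m \vee (\cdots)$ crudely, or bounds the ``+1'' cross-terms by $d^d$). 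Coefficients $8$ and $2\sqrt 2 < 1\cdot 3$ combine into the $9$ and the $5$ of~\eqref{eq:bd1}; on the low-probability event (probability $\leq 1/T$) the regret is trivially at most $T$, contributing the final $+1$. This yields~\eqref{eq:bd1} under~\eqref{eq:cdtheo}.

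For the second part, I would substitute $m = \lfloor T^\alpha\rfloor \leq T^\alpha$ and $E = m^2\lceil T^{2\gamma(d+2)/d}\rceil \geq m^2 T^{2\gamma(d+2)/d}$ into~\eqref{eq:bd1}. The key combination is $m\sqrt{2/E} = m/\bigl(m\sqrt{\lceil T^{2\gamma(d+2)/d}\rceil}\bigr) \leq T^{-\gamma(d+2)/d}$, so the bracketed deviation term raised to the power $d/(d+2)$ becomes $\leq T^{-\gamma}\bigl(8\ln(2T^{d+1})\bigr)^{d/(2(d+2))}$ or so, which (after absorbing $\ln(2T^{d+1}) \leq$ a constant times $\ln(2T^d)$ and the $\sqrt 8$) lands inside $\varepsilon(T,d)$ as the $5T^{-\gamma}(\ln(2T^d))^{d/(d+2)}$ piece. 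The term $Em^d = m^{d+2}\lceil T^{2\gamma(d+2)/d}\rceil \leq T^{\alpha(d+2)} \cdot 2 T^{2\gamma(d+2)/d}$; the exponent here is $\alpha(d+2) + 2\gamma(d+2)/d$, and plugging in the definition of $\alpha$ from~\eqref{eq:alphagamma} one checks this equals exactly $(d+1)/(d+2) - \gamma$, so $Em^d \leq 2 T^{(d+1)/(d+2)-\gamma} \leq 2 L^{d/(d+2)} T^{(d+1)/(d+2)} T^{-\gamma}$ up to the $L$-dependence (or, if $L<1$, just keep it as the $T^{-\gamma}$ summand in $\varepsilon$). The $2\sqrt{2Td^d}+1$ term divided out gives the last summand of $\varepsilon(T,d)$. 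Finally, condition~\eqref{eq:cdtheo} requires $\lfloor T^\alpha\rfloor \geq 8M/L$, i.e. $T^\alpha \geq 8M/L + 1$, i.e. $T \geq (8M/L+1)^{1/\alpha}$; when $T$ is below this threshold the regret is at most $T \leq (8M/L+1)^{1/\alpha}$, which is why the max with that quantity appears in~\eqref{eq:bd2}. The main obstacle, and the only genuinely delicate point, is the bookkeeping that makes the exponent $\alpha(d+2) + 2\gamma(d+2)/d$ collapse to $(d+1)/(d+2)-\gamma$ and that keeps all multiplicative constants at or below $9$ and $5$ — everything else is a routine chaining of the lemmas, but one must be careful that $\alpha>0$ (guaranteed by the upper bound on $\gamma$ in~\eqref{eq:alphagamma}) so that $m \to \infty$ and the high-probability regime eventually applies, and that the subadditivity splitting of the power $d/(d+2)$ is applied in the right place.
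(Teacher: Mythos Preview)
Your overall architecture matches the paper's proof exactly: start from Lemma~\ref{lm:3} with $\psi(T',m')=2\sqrt{2T'm'}$, bound $LT/\wt m$ using $\wt m \geq \wt L_m^{2/(d+2)}T^{1/(d+2)}$ and the lower bound $\wt L_m \geq L/8$ (from~\eqref{eq:cdtheo}), bound $2\sqrt{2T\wt m^d}$ using the upper bound on $\wt L_m$ and subadditivity of $x\mapsto x^{d/(d+2)}$, add $+1$ for the $1/T$-probability failure, then substitute the stated $m,E$ and check the exponent identity $\alpha(d+2)+2\gamma(d+2)/d=(d+1)/(d+2)-\gamma$.

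Two concrete points in your constant bookkeeping do not close, however. First, you bound $8^{2/(d+2)}\leq 8$, whereas the paper uses the sharper $8^{2/(d+2)}\leq 8^{2/3}=4$ (valid since $d\geq 1$). Second, your treatment of the ceiling in $\wt m^d$ is too loose: writing $\wt m \leq a+1$ with $a=\wt L_m^{2/(d+2)}T^{1/(d+2)}$ and ``expanding $(a+1)^{d/2}$'' does not give a leading term $a^{d/2}$ with a bounded multiplicative constant unless $a$ is large. The paper handles this by a case split: if $a\leq d$ then $\wt m\leq d$ and $2\sqrt{2T\wt m^d}\leq 2\sqrt{2Td^d}$; if $a\geq d$ then $(1+1/a)^d\leq e$, so $\wt m^d\leq e\,a^d$ and the leading coefficient is $2\sqrt{2e}\leq 5$, not $2\sqrt 2$. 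With your numbers the $L^{d/(d+2)}$ coefficient would be $8+2\sqrt 2>10$, not $9$; with the paper's it is $4+2\sqrt{2e}\leq 9$, and the $5$ in front of the deviation term is exactly $2\sqrt{2e}$. Once you insert these two fixes, your argument is the paper's proof verbatim.
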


Note that the choices of $E$ and $m$ solely depend on $T$, which may however
be unknown in advance; standard arguments, like the doubling trick, can be used to
circumvent the issue, at a minor cost given by an additional constant multiplicative
factor in the bound.

\begin{remark}
There is a trade-off between the value of the constant term
in the maximum, $(1+8M/L)^{1/\alpha}$, and the convergence rate of the vanishing term $\varepsilon(T,d)$ toward
0, which is of order $\gamma$. For instance, in the case $d = 1$, the condition on
$\gamma$ is $0 < \gamma < 2/15$; as an illustration, we get
\begin{itemize}
\item a constant term of a reasonable size, since $1/\alpha \leq 4.87$, when the convergence
rate is small, $\gamma = 0.01$;
\item a much larger constant, since $1/\alpha = 60$, when the convergence rate is faster,
$\gamma = 2/15 - 0.01$.
\end{itemize}
\end{remark}

\begin{proof}
For the strategy INF, as recalled above, $\psi(T',m') = 2 \sqrt{2 T' m'}$.
The bound of Lemma~\ref{lm:3} can thus be instantiated as
\[
\sup_{\cF_{L,M}} \oR_T \leq E m^d +
\E \left[ \frac{L T}{\wt{m}} + 2 \sqrt{ 2 T \wt{m}^d} \right].
\]
We now substitute the definition
\[
\wt{m} = \Bigl\lceil \wt{L}_m^{2/(d+2)} T^{1/(d+2)} \Bigr\rceil
\leq \wt{L}_m^{2/(d+2)} T^{1/(d+2)} \left( 1 + \frac{1}{\wt{L}_m^{2/(d+2)} T^{1/(d+2)}} \right)
\]
and separate the cases depending on whether $\wt{L}_m^{2/(d+2)} T^{1/(d+2)}$ is smaller or larger than $d$
to handle the second term in the expectation.
In the first case, we simply bound $\wt{m}$ by $d$ and get a $2 \sqrt{2 T d^d}$ term.
When the quantity of interest is larger than $d$, then we get the central term in the expectation below by
using the fact that $(1+1/x)^d \leq (1+1/d)^d \leq e$ whenever $x \geq d$.
That is, $\displaystyle{\sup_{\cF_{L,M}} \oR_T}$ is less than
\vspace{-.14cm}

\[
E m^d + \E \left[ T^{(d+1)/(d+2)} \frac{L}{\wt{L}_m^{2/(d+2)}} + 2 \sqrt{ 2 T \, e \Bigl( T^{1/(d+2)} \wt{L}_m^{2/(d+2)} \Bigr)^d }
+ 2 \sqrt{2 T d^d} \, \right].
\]

We will now use the lower and upper bounds on
$\wh{L}_m$ stated by Corollary~\ref{cor:wtL}.
In the sequel we will make repeated use of the following inequality linking
$\alpha$--norms and $1$--norms:
for all integers $p$, all $u_1,\ldots,u_p > 0$, and all $\alpha \in [0,1]$,
\begin{equation}
\label{eq:cvx}
\bigl( u_1 + \ldots + u_p \bigr)^\alpha \leq u_1^\alpha + \ldots + u_p^\alpha\,.
\end{equation}
By resorting to~(\ref{eq:cvx}), we get that with probability at least $1-1/T$,
\begin{eqnarray*}
\lefteqn{ 2 \sqrt{ 2 T \, e \Bigl( T^{1/(d+2)} \wt{L}_m^{2/(d+2)} \Bigr)^d } = 2 \sqrt{ 2 e} \,\, T^{(d+1)/(d+2)} \, \wt{L}_m^{d/(d+2)}} \\
& \leq & 2 \sqrt{ 2 e} \,\, T^{(d+1)/(d+2)} \,
\left( L + 2 m\sqrt{\frac{2}{E} \ln \bigl( 2 m^d T \bigr)} \right)^{d/(d+2)} \\
& \leq & 2 \sqrt{ 2 e} \,\, T^{(d+1)/(d+2)} \,
\left( L^{d/(d+2)} +
\left( 2 m\sqrt{\frac{2}{E} \ln \bigl( 2 m^d T \bigr)} \right)^{d/(d+2)} \right). \\
\end{eqnarray*}
On the other hand, with probability at least $1-1/T$,
\[
\wt{L}_m \geq L - \frac{7 M}{m} \geq \frac{L}{8}\,,
\]
where we assumed that $m$ and $E$ are chosen large enough for the lower bound of Corollary~\ref{cor:wtL}
to be larger than $L/8$. This is indeed the case as soon as
\[
\frac{7M}{m} \leq \frac{7L}{8}\,,
\qquad \mbox{that is,} \qquad
m \geq \frac{8 M}{L}\,,
\]
which is exactly the condition~(\ref{eq:cdtheo}).

Putting all things together (and bounding $m$ by $T$ in the logarithm), with probability
at least $1-1/T$, the regret is less than
\begin{multline}
\label{eq:regprob}
E m^d \ + \ T^{(d+1)/(d+2)} \frac{L}{(L/8)^{2/(d+2)}} \ + \ 2 \sqrt{2 T d^d} \\
+ 2 \sqrt{ 2 e} \,\, T^{(d+1)/(d+2)} \,
\left( L^{d/(d+2)} + \left( 2 m\sqrt{\frac{2}{E} \ln \bigl( 2 T^{d+1} \bigr)} \right)^{d/(d+2)} \right)\,;
\end{multline}
on the event of probability smaller than $\delta = 1/T$ where the above bound does not necessarily hold,
we upper bound the regret by $T$.
Therefore, the expected regret is bounded by~(\ref{eq:regprob}) plus 1.
Bounding the constants as $8^{2/(d+2)} \leq 8^{2/3} = 4$ and $2 \sqrt{ 2 e} \leq 5$
concludes the
proof of the first part of the theorem.
\medskip

The second part follows by substituting the values of $E$ and $m$ in
the expression above and by bounding the regret by $T_0$
for the time steps $t \leq T_0$ for which the condition~(\ref{eq:cdtheo}) is not satisfied.

More precisely, the regret bound obtained in the first part is of the desired order $L^{d/(d+2)}\,T^{(d+1)/(d+2)}$ only if
$E \gg m^2$ and $E m^d \ll T^{(d+1)/(d+2)}$.
This is why we looked for suitable values of $m$ and $E$ in the following form:
\[
m = \lfloor T^\alpha \rfloor
\qquad \mbox{and} \qquad
E = m^2 \, \bigl\lceil T^{2 \gamma (d+2)/d} \bigr\rceil\,,
\]
where $\alpha$ and $\gamma$ are positive.
We choose $\alpha$ as a function of $\gamma$ so that the terms
\begin{align*}
& E m^d = \bigl( \lfloor T^\alpha \rfloor\bigr)^{d+2} \, \bigl\lceil T^{2\gamma (d+2)/d} \bigr\rceil \\
\mbox{and} \qquad &
T^{(d+1)/(d+2)} \left( \frac{m}{\sqrt{E}} \right)^{d/(d+2)}
\, = T^{(d+1)/(d+2)} \Bigl( \bigl\lceil T^{2\gamma (d+2)/d} \bigr\rceil \Bigr)^{-d/(2(d+2))}
\end{align*}
are approximatively balanced; for instance, such that
\[
\alpha(d+2) + 2\gamma (d+2)/d = (d+1)/(d+2) - \gamma\,,
\]
which yields the proposed expression~(\ref{eq:alphagamma}).
The fact that $\alpha$ needs to be positive
entails the constraint on $\gamma$ given in~(\ref{eq:alphagamma}).

When condition~(\ref{eq:cdtheo}) is met, we substitute the values of
$m$ and $E$ into~(\ref{eq:bd1}) to obtain the bound~(\ref{eq:bd2}); the only moment in this substitution
when taking the upper or lower integer parts does not help is for the term
$Em^d$, for which we write (using that $T \geq 1$)
\begin{multline}
\nonumber
E m^d = m^{d+2} \, \bigl\lceil T^{2 \gamma (d+2)/d} \bigr\rceil
\leq T^{\alpha(d+2)} \bigl( 1 + T^{2 \gamma (d+2)/d} \bigr) \\
\leq 2 \, T^{\alpha(d+2)} T^{2 \gamma (d+2)/d} = 2 \, T^{(d+1)/(d+2) - \gamma}\,.
\end{multline}

When condition~(\ref{eq:cdtheo}) is not met, which can only be the case when
$T$ is such that $T^\alpha < 1 + 8M/L$, that is, $T < T_0 = (1+8M/L)^{1/\alpha}$,
we upper bound the regret by $T_0$.
\qed
\end{proof}

\subsection*{Acknowledgements}

This work was supported in part by French National Research Agency (ANR, project
EXPLO-RA, ANR-08-COSI-004) and the PASCAL2 Network of Excellence under EC grant {no.} 216886.

\bibliographystyle{alpha}
\bibliography{ALT-Paper-16-Bubeck-Stoltz-Yu-LipBandit}

\HALversion{
\newpage
\appendix

\section{Proof of Theorem~\ref{th:LB} \\ (Omitted from the Proceedings of ALT'11)}
\label{sec:th:LB}

\begin{proof}
We slightly adapt the (end of the) proof of~\cite[Theorem~13]{HOO};
we take the metric $\ell(\ux,\uy) = L \norm[\ux-\uy]_\infty$.
For $\eps \in (0,1/2)$, the $\eps$--packing number of $[0,1]^d$ with respect to $\ell$ equals
\[
\mathcal{N} \bigl( [0,1]^d, \, \ell, \, \eps \bigr) = \bigl( \lfloor L/\eps \rfloor \bigr)^d \geq 2
\]
provided that $L/\eps \geq 2$, that is, $\eps \leq L/2$. Therefore, Step~5 of the mentioned proof shows that
\[
\sup_{\cF_{L}} \oR_T \geq T \eps \left( 0.5 - 2.2 \, \eps \, \sqrt{ \frac{T}{\bigl( \lfloor L/\eps \rfloor \bigr)^d}} \right)
\]
for all $0 < \eps < \min \{ 1,L \} \big/ 2$. We now optimize this bound.

Whenever $L/\eps \geq \max \{ d,2 \}$, we have
\[
\lfloor L/\eps \rfloor \geq L/\eps - 1 \geq \frac{1}{2} \, L/\eps
\]
in the case where $d = 1$, while for $d \geq 2$,
\[
\bigl( \lfloor L/\eps \rfloor \bigr)^d \geq \bigl( L/\eps - 1 \bigr)^d \geq
\frac{1}{4} \bigl( L/\eps \bigr)^d\,,
\]
where we used the fact that $(1-1/x)^d \geq (1-1/d)^d \geq (1-1/2)^2 = 1/4$ for all $x \geq d$
and $d \geq 2$.

Therefore, whenever $0 < \eps < \min \bigl\{ 1/2, \, L/d, \, L/2 \bigr\}$,
\[
\sup_{\cF_{L}} \oR_T \geq T \eps \left( 0.5 - 4.4 \, \eps^{1+d/2} \, \sqrt{\frac{T}{L^d}}  \right)\,.
\]
We take $\eps$ of the form
\[
\eps = \gamma \, L^{d/(d+2)} \, T^{-1/(d+2)}
\]
for some constant $\gamma < 1$ to be defined later on. The lower bound then equals
\[
\gamma \, L^{d/(d+2)} \, T^{(d+1)/(d+2)} \bigl( 0.5 - 4.4 \, \gamma^{1+d/2} \bigr)
\geq \gamma \, L^{d/(d+2)} \, T^{(d+1)/(d+2)} \bigl( 0.5 - 4.4 \, \gamma^{3/2} \bigr)
\]
where we used the fact that $\gamma < 1$ and $d \geq 1$ for the last inequality.
Taking $\gamma$ such that $0.5 - 4.4 \, \gamma^{3/2} = 1/4$, that is, $\gamma = 1/(4\times4.4)^{2/3}
\geq 0.14$, we get the stated bound.

It only remains to see that the indicated condition on $T$ proceeds from the value
of $\eps$ provided above, the constraint $\eps < \min \bigl\{ 1/2, \, L/d, \, L/2 \bigr\}$,
and the upper bound $\gamma \geq 0.15$.
\qed
\end{proof}
}

\end{document}